\documentclass[a4paper,11pt]{article}
\usepackage{geometry}
\geometry{ hmargin=2cm, vmargin=2cm }
\usepackage{amsmath}
\usepackage{enumerate}
\usepackage{amsfonts}
\usepackage{graphicx}
\usepackage{amsthm}

\setlength{\headsep}{5pt}

\usepackage[latin1]{inputenc}
\usepackage[T1]{fontenc}
\usepackage[english]{babel}

\usepackage[nottoc, notlof, notlot]{tocbibind}

\usepackage{amsmath}
\usepackage{amssymb}
\usepackage{amsthm}

\newcommand{\corps}{\text{Tr }(\Gamma)}
\newcommand{\Q}{\mathbb{Q}}
\newcommand{\R}{\mathbb{R}}
\newcommand{\C}{\mathbb{C}}
\DeclareMathOperator{\IM}{Im}
\DeclareMathOperator{\RE}{Re}

\newcommand{\Tr}{\text{Tr}}
\newcommand{\ov}{\overline}

\newtheoremstyle{break}
 {\topsep}
 {\topsep}
 {\itshape}
 {}
 {\bfseries}
 {.}
 {\newline}
 {\thmname{#1}\thmnumber{ #2}\thmnote{. \textit{#3}}}

\theoremstyle{plain}
\newtheorem{thm}{Theorem}[section]
\newtheorem{prop}[thm]{Proposition}
\newtheorem{lemma}[thm]{Lemma}

\theoremstyle{definition}

\newtheorem{rem}[thm]{Remark}
\newtheorem*{ex}{Example}
\newtheorem{defn}[thm]{Definition}

\title{Trace fields of subgroups of SU($n$,1)}

\author{Juliette Genzmer\thanks{\texttt{genzmer@math.jussieu.fr}, Institut mathématique de Jussieu, 175 rue du Chevaleret, 75\,013 Paris.}}

\begin{document}


\maketitle

\begin{abstract}
In this note, we study the field generated by the traces of subgroups of SU($n$,1). Under some hypotheses, the trace field of a group $\Gamma \subset$ SU(2,1) is equal to the field generated by the coefficients of the matrices in $\Gamma$. If the group is the image of a representation of the fundamental group of a triangulated 3-manifold, we can relate the trace field to a geometric invariant. For an arithmetic group of the first type in SU($n$,1), up to conjugacy, the trace field and the field of the coefficients are the same.

\end{abstract}

\date{}

\section*{Introduction}
Arithmetic subgroups of PSL$_2(\C)$ are constructed from a pair ($k$,$\mathcal{A})$ where $k$ is a
number field with exactly one complex place and $\mathcal{A}$ is a quaternion algebra (see \cite{Neumann}). Conversely, from an arithmetic group $\Gamma$ one can rebuild these two objects. For
example, the field $k$ is generated by the traces of $\Gamma^{(2)}$ which is the subgroup of $\Gamma$ generated by all squares and is a commensurability invariant. Neumann and Reid extend these definitions to Kleinian groups. They develop the information $k$ gives on the groups. One of these properties is about the non-uniform lattices $\Gamma \subset $ PSL$_2(\C)$. \cite{Penner} proves that $\mathbb{H}_{\R}^3/ \Gamma$ can be triangulated by ideal hyperbolic tetrahedra  (with possibly flat
tetraedra). Each tetrahedron is characterized by a PSL$_2(\C)$-invariant $z\in \C$. It is the cross-ratio of the four vertices. The field generated by these invariants is exactly $k$. To prove this, the authors use the field $\Q(\Tr(\Gamma))$ generated by all traces. It is not a commensurability invariant as shown in \cite{Reid_note}. They show that for such a group $\Gamma$, the coefficients of the matrices are in $\Q(\Tr(\Gamma))$. In fact, this is true for groups that contain a parabolic element and do not fix a point. The proof works even if $\Gamma$ is not discrete.

In this note, we focus on adapting these tools to the complex case. In \cite{McReynolds}, the author extends the definition of $k$ and $\mathcal{A}$ to groups in SU($n$,1) and prove that they are commensurability invariants. We prove that $\Gamma$ is in SU(2,1,$\Q(\Tr(\Gamma))$), after possibly conjugating, if $\Gamma$ contains a parabolic element and is Zariski dense. In the real case, these hypotheses are realized when $\mathbb{H}_{\R}^3/ \Gamma$ is a non-compact manifold of finite volume. For a representation of the fundamental group of a triangulated 3-manifold, the field generated by the invariants of the tetrahedra is $k$ if the holonomy around each vertex is parabolic. These results do not use discreteness. In the case of lattices, we prove that the trace field and the field generated by the coefficients of an arithmetic group of the first type in SU($n$,1) are equal.

We thank N. Bergeron, E. Falbel, B. McReynolds, J. Paupert, E. Ramos and P. Will for many fruitful discussion.

 \section{Complex hyperbolic space}
\label{section1}

We review some material on complex hyperbolic space (for more details, we refer the reader to \cite{Goldman}).

We denote $\C^{n,1}$ the $n+1$-dimensional complex space equipped with a Hermitian form $\left\langle ,\right\rangle $ of signature $(n,1)$, linear in the first variable and antilinear in the second. For $z\in \C^{n,1}$, $\left\langle z,z\right\rangle $ is real, so we can define the following subspaces of $\C^{n,1}$:
 \[V_+=\{z\in \C^{n,1} : \left\langle z,z\right\rangle >0\}, \quad
V_0=\{z\in \C^{n,1} : \left\langle z,z\right\rangle =0\}, \quad
V_-=\{z\in \C^{n,1} : \left\langle z,z\right\rangle <0\}. 
\]
Consider the projection into the projective space $\C P^n$, noted $P$. \textit{Complex hyperbolic $n$-space} is the image under $P$ of $V_-$ equipped with the distance function given by: 
\[\cosh^2\left(\frac{1}{2}d(q_1,q_2) \right) =\dfrac{\left\langle \tilde{q}_1,\tilde{q}_2\right\rangle \left\langle \tilde{q}_2,\tilde{q}_1\right\rangle }{\left\langle \tilde{q}_1,\tilde{q}_1\right\rangle \left\langle \tilde{q}_2,\tilde{q}_2\right\rangle }\]where $\tilde{q}_1$ and $\tilde{q}_2$ are in $\C^{n,1}$, representing $q_1$ and $q_2$ in $\mathbb{H}^n_{\C}$ respectively ($d$ does not depend on the choice of the lifts). This distance corresponds to the Bergmann metric of the unit ball. The function $d$ is normalized so that the holomorphic sectional curvature is $-1$ while the real sectional curvature is pinched between $-1$
and $-1/4$. The boundary of $\mathbb{H}^n_{\C}$ is the projection $P(V_0)$.

Complex conjugation and PU($n$,1) act clearly as isometries on the hyperbolic space. In fact they generate all isometries.

\bigskip 

Each Hermitian form of signature $(n,1)$ gives a model of the hyperbolic space and we can pass between them using a \textit{Cayley transform}. In the case of dimension 2, we will use essentially the Hermitian form associated to the following matrix:
\[ J=\left(
\begin{array}{ c c c }
0&  0  &1 \\
0&1   & 0\\
1&  0  &0
  \end{array} \right).\]
That is $\left\langle z,w\right\rangle =z_1\ov{w_3}+z_2\ov{w_2}+z_3\ov{w_1}$ for $z$ and $w$ in $\C^3$. With this choice of Hermitian form, one obtains the \textit{Siegel domain} model of $\mathbb{H}^2_{\C}$.

\noindent The normalized lift of a point $p\in \mathbb{H}^2_{\C}$ is: 
\[\tilde{p}=\left(\begin{array}{ c  }
-\frac{1}{2}(|z|^2+u-it) \\
z\\
1  \end{array} \right) \quad\text{with }z\in \C,\: t\in \R , \: u \in \:]0,\infty[.\]

\noindent We call $(z,t,u)$ \textit{horospherical coordinates}. To compactify the Siegel domain, we add a point at infinity, denoted $\infty$. One of its lift is $^t(1,0,0)$. The other points of the boundary of this domain correspond to the coordinates $u=0$. So the boundary of the Siegel domain may be identified with $(\C \times \R )\cup \{\infty\}$. 

Another Hermitian form of signature (2,1) is given by: $\left\langle z,z\right\rangle _2=|z_1|^2+|z_2|^2-|z_3|^2$. In the projective space, using the section $z_3=1$, the space $\mathbb{H}^2_{\C}$ is identified with the unit ball and its boundary is $S^3$. 

\bigskip

As the sectional curvature is not constant, there are no totally geodesic real hypersurfaces. The totally geodesic submanifolds of dimension two are either totally real or complex linear subspaces. The first ones are called \textit{$\R$-planes}. They are $P(V) \cap \mathbb{H}^2_\C$ where $V$ is a 3-dimensional subspace of $\C^{2,1}$ such that $\left\langle z,w\right\rangle  \in \R$ for all $z$ and $w$ in $V$. For the second one, consider $c \in V_+$. We define the 2-dimensional subspace $c^\perp=\{z\in \C^{2,1} : \left\langle z,c\right\rangle =0\}$. The signature of the Hermitian form restricted to $c^\perp$ is $(1,1)$ so the intersection with $V_-$ is non empty. 
\begin{defn}
The submanifolds given by $P(c^\perp) \cap \mathbb{H}^2_\C$ are called \textit{complex lines} and the vector $c$ is a \textit{polar vector}.
\end{defn}

Holomorphic isometries of $\mathbb{H}^2_{\C}$ are classified by their fixed points. An element $\gamma$ of PU(2,1) is \textit{parabolic} if it fixes exactly one point in $\partial \mathbb{H}^2_{\C}$. If it fixes exactly two points on the boundary, $\gamma$ is a \textit{loxodromic} transformation. And if $\gamma$ fixes at least one point in $\mathbb{H}^2_{\C}$, it is \textit{elliptic}. 

A lift in SU(2,1) of a parabolic transformation that fixes $\infty$ is:

\[P=e^{i\theta}
 \begin{pmatrix}
1 & -\ov{z}e^{-3i\theta} & -\frac{1}{2}(|z|^2-it) \\
0 & e^{-3i\theta}  & z \\
0 & 0 & 1               
\end{pmatrix}.\]
\noindent \textit{Pure parabolics} are the unipotent one, that is $P-\text{Id}$ is nilpotent. There are two conjugacy classes divided according to the nilpotency index. The first one is represented by $P$ with $\theta=0$, $z=0$ and $t=1$. It acts on the boundary as $(w,s)\mapsto(w,s+1)$, as a vertical translation. The other class is represented by $P$ when $\theta=0$,$z=1$ and $t=1$. It is a horizontal translation.
The \textit{screw parabolics} have two different eigenvalues. Each class of conjugacy is represented by $P$ with $z=0$, $t=1$ and $\theta \neq 0$. They act as a translation on a complex line and as a rotation around this one.

\noindent Up to conjugation, a loxodromic transformation fixes $0$ and $\infty$. A lift in SU(2,1) is:
$\begin{pmatrix}
 \lambda &0 &0 \\
0 & \frac{\ov{\lambda}}{\lambda} &0 \\
0 & 0 & \frac{1}{\;\ov{\lambda}\;}
\end{pmatrix}$. On the boundary, it acts as a dilation: $(z,t)\mapsto\left(\frac{\ov{\lambda}^2}{\lambda}z,|\lambda|^2 t\right)$.

 \section{Trace field}
\begin{defn}
For any subgroup $G$ in $\text{SU}(2,1)$, let us introduce the notation, the field $\Tr(G)$ and the algebra $A(G)$ as follow:
$$\text{Tr}(G)=\mathbb{Q}\left( \text{Tr}(g):g\in G \right) ,$$
$$A(G)=\left\lbrace \sum_i a_i g_i \: :\: a_i\in \Tr(G) , \: g_i \in G\right\rbrace. $$

Now, let $\Gamma$ be a subgroup of PU(2,1). A element $\gamma\in \Gamma$ has three lifts in SU(2,1): $\tilde{\gamma}, \omega\tilde{\gamma}, \omega^2\tilde{\gamma}$ (where $\omega$ is a cube root of unity). We denote $\widetilde{\Gamma}$ the group of all lifts in $\text{SU}(2,1)$ of elements of $\Gamma$. It satisfies the short exact sequence $0\rightarrow \mathbb{Z}_3 \rightarrow\widetilde{\Gamma} \rightarrow \Gamma \rightarrow 1$. The field $\Tr\: \tilde\Gamma$ is stable by complex conjugation $\left(\text{Tr}(\gamma^{-1})=\overline{\text{Tr}(\gamma)}\right)$. 

We will be interested in Tr$(\widetilde{\Gamma}^{(3)})$ where $\widetilde{\Gamma}^{(3)}$ is the subgroup generated by all cubes. The field will be noted $k(\Gamma)$.

\end{defn}

In \cite{McReynolds}, it is proved that $k(\Gamma)$ is a commensurability invariant for $\Gamma$ finitely generated. For the reader's convenience, we repeat here the proof.

\begin{prop}
\label{preuve de Mc}
Let $\Gamma$ be a finitely generated subgroup of PU(2,1). The field $k(\Gamma)$ is a commensurability invariant.
\end{prop}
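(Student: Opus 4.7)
Since two commensurable subgroups share the finite-index common subgroup $\Gamma_1\cap\Gamma_2$, and a finite-index subgroup contains its normal core (still of finite index), it suffices to show $k(H)=k(\Gamma)$ whenever $H$ is a normal finite-index subgroup of $\Gamma$. The inclusion $k(H)\subset k(\Gamma)$ is immediate from $\widetilde H^{(3)}\subset\widetilde\Gamma^{(3)}$.

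For the reverse inclusion, the key structural fact is that $\widetilde H^{(3)}$ is itself a normal subgroup of finite index $m$ in $\widetilde\Gamma$. Indeed, $\widetilde\Gamma/\widetilde\Gamma^{(3)}$ is a quotient of a Burnside group $B(r,3)$, which is finite by Burnside's theorem, so $\widetilde\Gamma^{(3)}$ has finite index in $\widetilde\Gamma$; and $\widetilde H^{(3)}$ is characteristic in $\widetilde H$, which is itself normal in $\widetilde\Gamma$. By Lagrange, $\tilde\gamma^m\in\widetilde H^{(3)}$ for every $\tilde\gamma\in\widetilde\Gamma$, hence $\Tr(\tilde\gamma^{m\ell}\tilde\delta)\in k(H)$ for every $\ell\in\mathbb Z$ and every $\tilde\delta\in\widetilde H^{(3)}$.

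I would then use Cayley--Hamilton in $\mathrm{SU}(2,1)$: for $A=\tilde\gamma$ with $\det A=1$, one has $A^3=tA^2-sA+I$ where $t=\Tr A$ and $s=\Tr A^{-1}$, which gives $\Tr(\tilde\gamma^3)=t^3-3ts+3$ by Newton's formulae. The problem therefore reduces to showing $t,s\in k(H)$. Right-multiplying Cayley--Hamilton by $\tilde\gamma^k\tilde\delta$ and taking traces produces the recurrence
\[\Tr(\tilde\gamma^{k+3}\tilde\delta)=t\,\Tr(\tilde\gamma^{k+2}\tilde\delta)-s\,\Tr(\tilde\gamma^{k+1}\tilde\delta)+\Tr(\tilde\gamma^{k}\tilde\delta),\]
and for suitably chosen pairs $(k,\tilde\delta)$ all four traces will lie in $k(H)$, giving linear relations in $(t,s)$ with coefficients in that field.

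\textbf{The main obstacle} is to organise these choices so that (a) each of the four products really belongs to $\widetilde H^{(3)}$, and (b) the resulting system of linear equations in $(t,s)$ is non-degenerate, so that $t$ and $s$ are uniquely recovered in $k(H)$. Finite generation enters essentially here, both through Burnside (for the finiteness of $m$) and in providing enough elements of $\widetilde H^{(3)}$ together with their conjugates by powers of $\tilde\gamma$ to make the coefficient matrix of the recurrence invertible. Once $t,s\in k(H)$ is established, the identity $\Tr(\tilde\gamma^3)=t^3-3ts+3$ places $\Tr(\tilde\gamma^3)$ in $k(H)$, giving the missing inclusion $k(\Gamma)\subset k(H)$.
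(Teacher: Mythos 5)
There is a genuine gap here --- two, in fact. First, the reduction is too weak: $k(\Gamma)=\Tr(\widetilde\Gamma^{(3)})$ is generated by the traces of \emph{all} elements of $\widetilde\Gamma^{(3)}$, i.e.\ of arbitrary products of cubes, not only of the cubes themselves; establishing $\Tr(\tilde\gamma^3)\in k(H)$ for every $\tilde\gamma$ would say nothing about $\Tr(\tilde\gamma_1^3\tilde\gamma_2^3)$. Second, and more seriously, the step you flag as ``the main obstacle'' is not an organisational difficulty but an impossibility. If two consecutive terms $\tilde\gamma^{k}\tilde\delta$ and $\tilde\gamma^{k+1}\tilde\delta$ both lie in $\widetilde H^{(3)}$, then $\tilde\gamma=(\tilde\gamma^{k+1}\tilde\delta)(\tilde\gamma^{k}\tilde\delta)^{-1}$ lies in $\widetilde H^{(3)}$ already, so the pairs $(k,\tilde\delta)$ you need cannot exist except for elements where there is nothing to prove; knowing the traces only along exponents divisible by $m$ does not determine the coefficients of a three-term recurrence that links consecutive exponents. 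Worse, the conclusion you are aiming for, $t=\Tr(\tilde\gamma)\in k(H)$ for all $\tilde\gamma\in\widetilde\Gamma$, is false in general: taking $H=\Gamma$ it would give $\Tr(\widetilde\Gamma)\subset k(\Gamma)$, whereas for instance $\omega\,\mathrm{Id}\in\widetilde\Gamma$ has trace $3\omega$ and $\omega$ need not lie in $k(\Gamma)$. The full trace field is precisely \emph{not} a commensurability invariant --- that is the whole reason for passing to cubes --- so no correct argument can force it into $k(H)$.

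The missing idea is the one the paper uses: work with the algebra $A(H)=\left\lbrace \sum_i a_i h_i : a_i\in\Tr(H),\ h_i\in H\right\rbrace$ rather than with individual trace identities. Since $H$ may be taken normal in $\Gamma$, conjugation by $g\in\Gamma$ induces an automorphism of the central simple algebra $A(H)$ over $\Tr(H)$, which by Skolem--Noether is inner, realised by some $g_1\in A(H)^\times$. Then $g_1^{-1}g$ centralises $A(H)\otimes\C\cong M_3(\C)$, so $g=\beta g_1$ with $\beta\in\C^*$, and $\det g=1$ forces $\beta^3=\det(g_1^{-1})\in\Tr(H)$. Hence $g^3=\beta^3 g_1^3\in A(H)$; because $A(H)$ is closed under products, every element of $\Gamma^{(3)}$ lies in $A(H)$, and by linearity of the trace its trace lies in $\Tr(H)$. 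This is where the exponent $3$ (the size of the matrices, via the determinant) really enters, and it disposes of products of cubes for free --- something your Cayley--Hamilton recurrence cannot do.
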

\begin{proof}
We start by proving that for any finite index subgroup $\Gamma_1 \subset \Gamma$ we have $\Tr(\Gamma^{(3)})\subset \Tr(\Gamma_1)$. Then if $\Gamma$ and $\Delta$ are commensurable, the intersection $\Gamma^{(3)} \cap \Delta^{(3)}$ has finite index in both of them. So $\Tr (\Gamma^{(3)})\subset \Tr(\Gamma^{(3)} \cap \Delta^{(3)})\subset \Tr(\Delta^{(3)})$ and we conclude $k(\Gamma)=k(\Delta)$.

We may assume that $\Gamma_1 $ is normal in $\Gamma$ (otherwise, take the intersection of all conjugates of $\Gamma_1$ under $\Gamma$). Let $g$ be in $\Gamma$. Conjugation by $g$ induces an automorphism of $\Gamma_1$ and so an automorphism $\phi_g$ of $A(\Gamma_1)$.  The set $A(\Gamma_1)$ is a central simple algebra over $\text{Tr}(\Gamma_1)$.
The Skolem-Noether theorem implies that $\phi_g$ is an inner automorphism:

\[ \exists g_1\in (A( \Gamma_1))^\times  \:\:\forall x \in A(\Gamma_1 )\hspace{0.5cm} \phi_g(x)=g_1 x g_1^{-1}.\]

\noindent In $A( \Gamma_1)\otimes \mathbb{C}\cong M(3,\mathbb{C})$, $g_1^{-1}g$ is central, and so: $g_1^{-1}g=\beta \text{Id}$ with $\beta$ in $\mathbb{C}^*$. As $\text{det}(g)=1$, $\beta^3=\text{det}(g_1^{-1})\in \Tr(\Gamma_1)$.

So $g^3=\beta^3 g_1^3\in A(\Gamma_1 )$. This implies the inclusion: $k(\Gamma)=\Tr(\Gamma^{(3)})\subset \Tr(\Gamma_1)$ and concludes the proof.
\end{proof}

\begin{ex} Let $\mathcal{O}_d$ be the ring of integers in the imaginary quadratic number field $\mathbb{Q}(i\sqrt{d})$, where $d$ is a positive square-free integer. If $\Gamma$ is the subgroup of SU(2,1) with coefficients in $\mathcal{O}_d$, then $\Tr({\Gamma}^{(3)})=\mathbb{Q}(i\sqrt{d})$.
\end{ex}

\subsection{Trace field and the coefficients of $\Gamma$}

The next theorem relates the trace field $\Tr \:\Gamma$ and the coefficients of the matrices of ${\Gamma}$ :

\begin{thm}
\label{prop coeff trace}
Let $\Gamma$ be a subgroup of $\text{SU}(2,1)$. Suppose that $\Gamma$ contains a parabolic element $P$. If $\Gamma$ is Zariski dense then it can be conjugated to a subgroup of $\text{SU}(2,1,\text{Tr}({\Gamma})$).
\end{thm}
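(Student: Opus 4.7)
The plan is to adapt the argument of Neumann and Reid in PSL$_2(\mathbb{C})$ recalled in the introduction. The parabolic $P$ plays the role of a ``linearizer'': for any $g$, the trace $\Tr(P^n g)$ depends polynomially on $n$ and its coefficients, which lie in $\Tr(\Gamma)$, recover linear combinations of the entries of $g$. Zariski density is the complex analogue of the real hypothesis ``$\Gamma$ does not fix a point'' and rules out the degenerate case in which $\Gamma$ lies in the stabilizer of the fixed point of $P$.

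First I would conjugate $\Gamma$ inside SU(2,1) so that $P$ fixes $\infty$ in the Siegel model; by the normal forms of Section \ref{section1}, $P$ is then upper triangular. In the favourable case where $P$ is the pure horizontal-translation parabolic, $N := P - \text{Id}$ is nilpotent of index three, and a further conjugation by a diagonal element of the stabilizer of $\infty$ puts $P$ in a matrix form with rational entries, so $P \in \text{SU}(2,1,\Tr(\Gamma))$. For every $g \in \Gamma$ and every $n \in \mathbb{Z}$ one then has the identity
\[
\Tr(P^n g) \;=\; \Tr(g) + n\,\Tr(Ng) + \tbinom{n}{2}\,\Tr(N^2 g),
\]
and taking first and second differences shows that both $\Tr(Ng)$ and $\Tr(N^2 g)$ lie in $\Tr(\Gamma)$. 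A direct computation in this normalization gives $\Tr(N^2 g) = -g_{31}$, so $g_{31}\in \Tr(\Gamma)$ for every $g \in \Gamma$. By Zariski density, the proper subvariety $\{g_{31}=0\}$ cannot contain $\Gamma$; fix $g_0 \in \Gamma$ with $(g_0)_{31} \neq 0$ and conjugate further by a diagonal matrix commuting with $P$ to rescale $(g_0)_{31}$ to a prescribed element of $\Tr(\Gamma)$, say $1$. Then the polynomial-in-$n$ identity applied to $\Tr(P^n g_0)$, $\Tr(P^n g_0^{-1})$, $\Tr(P^n g_0 P^m)$ and $\Tr(P^n g_0 P^m g_0^{-1})$ produces enough relations over $\Tr(\Gamma)$, linear in the remaining entries of $g_0$, to solve for each of them. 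Once $g_0$ is known to have entries in $\Tr(\Gamma)$, any other $h \in \Gamma$ is handled the same way by taking traces of products of $h$ with $P$ and $g_0$.

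The main obstacle is to treat uniformly every conjugacy class of parabolic. If $P$ is a vertical-translation parabolic (so $N^2 = 0$) or a screw parabolic ($\theta \neq 0$), then $P^n$ acquires either only one linear term or an exponential factor $e^{in\theta}$, and $\Tr(N^2 g)$ no longer equals $-g_{31}$, so this entry is not read off directly. I would exploit Zariski density to produce, inside $\Gamma$, a conjugate $h P h^{-1}$ whose fixed point is in general position with respect to $\infty$; the two parabolics together should provide the linear functionals missing to isolate each entry. Checking that this can be done for every parabolic type, and that the resulting trace system has full rank over $\Tr(\Gamma)$, is where the computational work lies.
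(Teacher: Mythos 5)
Your overall plan (normalize $P$ at $\infty$, recover matrix entries from traces of words, use Zariski density to get a generic companion element and enough trace relations) is the same in spirit as the paper's, but the part you defer is precisely the content of the theorem, so there is a genuine gap. Your explicit computation only works when $P$ is unipotent of nilpotency index three, where $P^n=\mathrm{Id}+nN+\binom{n}{2}N^2$ and $\Tr(N^2g)=-g_{31}$. For the vertical translations ($N^2=0$) and for screw parabolics ($\theta\neq 0$) you offer only the hope that a conjugate $hPh^{-1}$ in general position ``should provide the missing linear functionals''; that is not an argument, and it is exactly where the paper works: Lemma \ref{lemma eigenvalues} puts the eigenvalue $e^{i\theta}$ in $\Tr(\Gamma)$, and the auxiliary function $t(X)=(\Tr PX-e^{i\theta}\Tr X)(e^{-2i\theta}+e^{i\theta})-(\Tr P^2X-e^{2i\theta}\Tr X)$, evaluated on a second element $L$ with $L(\infty)=0$ (which exists because $\Gamma$ is not elementary), extracts the entries of $L$ and the residual parameter of $P$ one by one in each of the two dilation-normal forms of $P$. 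Likewise your final step --- that traces of $P^ng_0P^m$, $P^ng_0P^mg_0^{-1}$, etc.\ give a system of full rank over $\Tr(\Gamma)$ determining all entries of $g_0$ and then of any $h$ --- is asserted, not verified; the clean way to finish (the paper's) is that once \emph{two} elements $P,L$ are known to lie in $\mathrm{SU}(2,1,\Tr(\Gamma))$, Zariski density yields words $A_1,\dots,A_8$ in $P,L$ such that $(\mathrm{Id},A_1,\dots,A_8)$ is a basis of the $9$-dimensional matrix algebra, and nondegeneracy of the trace form then gives the entries of every $X\in\Gamma$ from the linear system $\Tr(XA_i)\in\Tr(\Gamma)$.

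Two of your normalization steps are also incorrect as stated. A horizontal-translation parabolic cannot be given rational entries by a diagonal conjugation alone: dilations act on Heisenberg translations by $(z,t)\mapsto(\ov{\lambda}^2\lambda^{-1}z,\,|\lambda|^2t)$, so $t/|z|^2$ is a dilation invariant and the entry $-\tfrac12(|z|^2-it)$ keeps an uncontrolled real parameter; the paper accordingly keeps the parameter $s$ in the normal form and proves $is\in\Tr(\Gamma)$ only at the end, using $L$. And after fixing $P$ you cannot ``conjugate further by a diagonal matrix commuting with $P$'' to rescale $(g_0)_{31}$: the diagonal elements of $\mathrm{SU}(2,1)$ commuting with a unipotent horizontal translation are only the central cube roots of unity, so $|(g_0)_{31}|$ cannot be changed --- and since you have already shown $g_{31}\in\Tr(\Gamma)$ for every $g$, this rescaling is as unnecessary as it is impossible. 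The point you are missing is that the conjugations must preserve the normalizations of $P$ and of the companion element simultaneously; the paper does this by fixing $P(\infty)=\infty$ and $L(\infty)=0$, allowing only dilations afterwards, and choosing the dilation in terms of the entries of $L$.
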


\noindent We start with the following lemma :
\begin{lemma}
\label{lemma eigenvalues}
Let $P$ be a non-unipotent parabolic matrix. Then, its eigenvalues are in the field generated by Tr$P$ and $\ov{\text{Tr}P}$.
\end{lemma}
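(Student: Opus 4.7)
The plan is as follows. First, I would write down the characteristic polynomial of $P$ using that $P \in \text{SU}(2,1)$. The relation $P^{-1} = J^{-1} P^* J$ gives $\Tr(P^{-1}) = \ov{\Tr(P)}$, and since $\det P = 1$ the characteristic polynomial of $P$ is
\[
p(x) = x^3 - t\,x^2 + \bar t\,x - 1,
\]
where $t = \Tr(P)$ and $\bar t = \ov{\Tr(P)}$. Its coefficients already lie in $\Q(t, \bar t)$.

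Second, I would read off the eigenvalue structure. A parabolic element of $\text{SU}(2,1)$ is not diagonalisable, so $p(x)$ must have a repeated root $\lambda$; since $\det P = 1$ the three eigenvalues are $\lambda, \lambda, \lambda^{-2}$. The non-unipotence hypothesis is precisely the statement $\lambda^3 \neq 1$, i.e.\ the repeated root is not equal to the simple one. Hence $\lambda$ is a common zero of $p(x)$ and of $p'(x) = 3x^2 - 2tx + \bar t$.

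The heart of the proof is then a short elimination over $\Q(t, \bar t)$. Computing $3\,p(\lambda) - \lambda\, p'(\lambda) = 0$ eliminates $\lambda^{3}$ and yields the further quadratic relation $t\lambda^{2} - 2\bar t\,\lambda + 3 = 0$. Together with $3\lambda^{2} - 2t\lambda + \bar t = 0$, one has a linear system in the unknowns $\lambda^{2}$ and $\lambda$ with coefficients in $\Q(t, \bar t)$. Eliminating $\lambda^{2}$ produces the explicit formula
\[
\lambda = \frac{t\bar t - 9}{2(t^{2} - 3\bar t)},
\]
from which $\lambda \in \Q(t, \bar t)$, and hence also $\lambda^{-2} \in \Q(t, \bar t)$.

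The only point needing care is to check that the denominator is not zero. Substituting $t = 2\lambda + \lambda^{-2}$ and $\bar t = 2\lambda^{-1} + \lambda^{2}$ and simplifying gives $t^{2} - 3\bar t = (\lambda - \lambda^{-2})^{2}$, which vanishes precisely when $\lambda^{3} = 1$, i.e.\ exactly in the unipotent case excluded by hypothesis. This is the only subtle step; everything else is routine algebra and the argument uses neither discreteness nor Zariski density.
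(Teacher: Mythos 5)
Your proof is correct, but it takes a genuinely different route from the paper's. You argue intrinsically: since $P\in\mathrm{SU}(2,1)$ satisfies $P^*JP=J$ and $\det P=1$, its characteristic polynomial is $p(x)=x^3-tx^2+\ov{t}\,x-1$ with $t=\Tr P$, parabolicity (non-diagonalisability) forces a repeated root $\lambda$, so $\lambda$ is a common zero of $p$ and $p'$, and your elimination yields the closed formula $\lambda=\dfrac{t\ov{t}-9}{2(t^2-3\ov{t})}$, whose denominator equals $(\lambda-\lambda^{-2})^2$ and so vanishes only when $\lambda^3=1$. The paper instead conjugates $P$ to the screw-parabolic normal form, computes $\Tr P$ and $|\Tr P|^2$ as explicit trigonometric expressions in $\theta$, and solves for $\cos\theta$ and $i\sin\theta$ rationally in $\RE(\Tr P)$, $\IM(\Tr P)$ and $|\Tr P|^2$. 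Your version avoids the choice of normal form and any trigonometry, exhibits the eigenvalue directly as a rational function of $\Tr P$ and $\ov{\Tr P}$, and makes the exceptional locus completely explicit. One caveat, which you share with the paper: ``non-unipotent'' does not literally exclude lifts of the form $\omega U$ with $U$ unipotent and $\omega$ a primitive cube root of unity; for such $P$ one has $\lambda^3=1$, your denominator $t^2-3\ov{t}$ vanishes, and so does the paper's denominator $2\RE(\Tr P)+3=(2\cos\theta+1)^2$. In that case the conclusion is trivially true (all eigenvalues equal $\Tr P/3$), so the lemma stands, but your identification of non-unipotence with $\lambda^3\neq 1$ should be stated as ``the eigenvalues of $P$ are not all equal'', exactly the reading the paper implicitly uses.
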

\begin{proof}
The matrix $P$ is conjugated to $e^{i\theta}\begin{pmatrix}
1&0&\frac{i}{2} \\
0&e^{-3i\theta}&0 \\
0&0&1
\end{pmatrix}$. The trace of $P$ is: 
\[\text{Tr}(P)=2\cos^2\theta +2\cos\theta-1+2i\sin \theta(1-\cos\theta)\qquad \text{and}\qquad|\text{Tr}(P)|^2=5+4\cos3\theta.\]

\noindent The following formulas prove the result:
\[\cos\theta=\dfrac{\frac{|\text{Tr}(P)|^2-5}{4}+2\RE(\text{Tr}P)+2}{2\RE(\text{Tr}P)+3}\quad\text{ and }\quad i\sin\theta=\dfrac{i\IM(\text{Tr}P)}{2(1-\cos\theta)}.\]
\end{proof}

\noindent Now, we prove the theorem:

\begin{proof}
We will start finding eight matrices $A_i$ in ${\Gamma}$ such that $(\text{Id},A_1,..,A_8)$ is a basis of $M_9(\mathbb{C})$ and its coefficients will be in Tr$({\Gamma})$. Then if $X$ is a matrix in ${\Gamma}$, then its coefficients are solutions of the following system:
\[\left\lbrace \begin{array}{l}
 \text{Tr}(X)=a_0
\\\text{Tr}(XA_1)=a_1
\\...
\\\text{Tr}(XA_8)=a_8
\end{array}\right.\]
with $a_i$ in Tr$({\Gamma})$. This system is linear invertible and its coefficients are in Tr$({\Gamma})$. So, $X \in$ SU(2,1,Tr$({\Gamma})$).

To construct the eight matrices, we use the matrix parabolic $P\in \Gamma$. As the group is non-elementary, we can find $L$ that does not fix the fixed point of $P$ and such that $P$ and $L$ do not stabilize a complex line. 
The aim of the end of the proof is to show that the coefficients of $P$ and $L$ are in Tr$({\Gamma})$. We may assume that $P$ fixes infinity and that $L$ maps infinity to 0. Now, to keep these properties of $P$ and $L$, we are allowed to conjugate $\Gamma$ only by dilations: $ \begin{pmatrix}
\xi&0&0 \\
0&a&0 \\
0&0&\frac{1}{\;\overline{\xi}\;}
\end{pmatrix}$. 
Up to dilations, the conjugacy classes of parabolic transformations that fix infinity are represented by one of the following matrices:  
 
\[ e^{i\theta}\begin{pmatrix}
1&0&\frac{i}{2} \\
0&e^{-3i\theta}&0 \\
0&0&1
\end{pmatrix} \quad \text{ or } \quad   
 e^{i\theta}\begin{pmatrix}
1&-e^{3i\theta}&\frac{-1+is}{2} \\
0&e^{-3i\theta}&1 \\
0&0&1
\end{pmatrix}\quad  \text{(with }s \text{ real}).\]

\noindent All we know about $L$ is that it maps $\infty$ to 0. So, the form of $L$ is:
\[L= \begin{pmatrix}
0&0& \frac{1}{\;\overline{\lambda}\;}\\
0&-\frac{\overline{\lambda}}{\lambda}&\frac{1}{\;\overline{\lambda }\;}\xi\\
\lambda&\frac{\overline{\lambda}}{\lambda}\xi&\frac{1}{2}\frac{-|\xi|^2+iu}{\overline{\lambda}}
\end{pmatrix}.\] 
\noindent As $L$ does not fix a common complex line with $P$, $\xi\neq 0$.

\noindent For each conjugacy class, we prove that $P$ and $L$ are in SU(2,1,Tr$({\Gamma})$).

\bigskip

\noindent \textbf{First case:}

\bigskip

\noindent $P=e^{i\theta}\begin{pmatrix}
1&0&\frac{i}{2} \\
0&e^{-3i\theta}&0 \\
0&0&1
\end{pmatrix}.$
\\To eliminate the value $\xi$ in the matrices, we conjugate the group by $Y=\text{diag}(
\frac{1}{\;\overline{\xi}\;},i,\xi)$. The elements become:
$$P= e^{i\theta}\begin{pmatrix}
1&0&\frac{i}{2|\xi|^2} \\
0&e^{-3i\theta}&0 \\
0&0&1
\end{pmatrix}\quad\text{ and }\quad L= \begin{pmatrix}
0&0&\frac{1}{\overline{\lambda}|\xi|^2}\\
0&-\frac{\overline{\lambda}}{\lambda}&\frac{i}{\;\overline{\lambda}\;} \\
\lambda|\xi|^2&-i|\xi|^2\frac{\overline{\lambda}}{\lambda}&\frac{1}{2}\frac{-|\xi|^2+iu}{\overline{\lambda}}
\end{pmatrix}.$$

\noindent According to Lemma \ref{lemma eigenvalues}, the eigenvalue $e^{i\theta}$ is in $\Tr ({\Gamma})$. To see that the coefficients are the traces of some elements of ${\Gamma}$, we introduce the following function:

$$t(X)=(\Tr PX-e^{i\theta}\Tr X)(e^{-2i\theta}+e^{i\theta})-(\Tr P^2 X-e^{2i\theta} \Tr X).$$
\noindent A calculation gives: $t(L)=-\frac{e^{i3\theta}-1}{2e^{i\theta}}i\lambda$. If $e^{i\theta}\neq 1$, as $e^{i\theta}$ and $t(L)$ are in the trace field, so $i\lambda$ is in $\corps$. In the unipotent case, when $e^{i\theta}= 1$, then look at Tr($LP$)-Tr($L$)$=\frac{i\lambda}{2}$. Again, $i\lambda \in$ Tr$({\Gamma})$. Observe that Tr(L)=$-\frac{\overline{\lambda}}{\lambda}+\frac{1}{2}\frac{-|\xi|^2+iu}{\overline{\lambda}}$. As Tr$({\Gamma})$ is stable by complex conjugation, $i|\xi|^2$ and $u$ are in the trace field. So, the matrices $P$ and $L$ are in SU(2,1,Tr$({\Gamma})$).

\bigskip

\noindent \textbf{Second case:}

\bigskip

\noindent $P$ has the following form: $ e^{i\theta}\begin{pmatrix}
1&-e^{3i\theta}&\frac{-1+is}{2} \\
0&e^{-3i\theta}&1 \\
0&0&1
\end{pmatrix}$.
\\Lemma \ref{lemma eigenvalues} proves that the eigenvalue $e^{i\theta}$ of $P$ is in the trace field. In this case, we have: 
\\$t(L)=-\frac{1}{2}\lambda e^{-i\theta}(-1-e^{3i\theta}-is+ie^{3i\theta}s)$.
This is not zero. Indeed, if $t(L)=0$, then $s$ is equal to the real $i\dfrac{1+e^{3i\theta}}{1-e^{3i\theta}}$ and in this case, $P$ is elliptic. So, $\dfrac{t(L^{-1})}{t(L)}=\dfrac{\overline{\lambda}}{\lambda}$ is in the trace field. With Tr$(L)$ and Tr$(L^{-1})$, we prove that $\frac{|\xi|^2}{\lambda}$ and $i\frac{u}{\lambda}$ are in $\corps$. We observe that : 
\[4 e^{i\theta}\dfrac{t([P,L])}{t(L)}-\Tr((LP)^{-1})-e^{-i\theta}\Tr(L^{-1})=-e^{2i\theta}\dfrac{\ov{\lambda}^2}{\lambda^2}\ov{\xi}+(e^{2i\theta}-e^{-i\theta})\left(\dfrac{|\xi|^2}{\lambda}+\dfrac{\lambda}{\;\ov{\lambda}\;}\right)\in \corps.\]
So, $\xi\in \corps$ and it follows that $\lambda$ is in the field too. For the last coefficient $is$, see: \[\Tr(PL)-e^{i\theta}\Tr(L)=\frac{e^{i\theta}\lambda}{2}is-\frac{e^{i\theta}}{2}\lambda-\frac{\ov{\lambda}}{\lambda}(e^{i\theta}-e^{-2i\theta}+e^{i\theta}\ov{\xi}).\]
\noindent We can conclude that $is\in \corps$ like all the coefficients of $P$ and $L$.

\bigskip

As the group $\Gamma$ is Zariski dense, we can find in $\left\langle P,L \right\rangle $ a basis of $M_9(\C) : (\text{Id}, A_1,\cdots A_8)$ that completes the proof. 
\end{proof}

\begin{rem} The hypothesis Zariski dense is necessary: if we allow $\Gamma$ to fix one point, we can consider a unipotent parabolic group that fixes infinity. The trace field is $\mathbb{Q}$ and we cannot control the coefficients of the matrices. 
\\The group generated by $P= \begin{pmatrix}
1&0&\frac{i}{2} \\
0&1&0 \\
0&0&1
\end{pmatrix}$ and $L= \begin{pmatrix}
0&0&\frac{i}{2} \\
0&-1&0 \\
2i&0&1
\end{pmatrix}$ fixes a complex line. The trace field is $\mathbb{Q}$, and no conjugate group has their coefficients contained in $\mathbb{R}$. 
\end{rem}

\section{Invariant fields}
There is a more geometric description of the trace field. To see it, we introduce another field: the field of the invariants.

Let $M$ denote an oriented non-compact 3-dimensional manifold. Topologically, $M$ is the interior of a 3-manifold $\ov{M}$ whose boundary is the union of a finite collection of 2-dimensional tori. Consider the holonomy representation $\rho$ of $M$ into PU(2,1). Let $\Gamma$ denote the image group : $\rho(\Pi_1(M))$. According to GNA, $M$ admits an ideal triangulation. Each tetrahedron of the triangulation can be realized as a quadruple of points in $S^3$. Each face is identified with another one by an application in PU(2,1). Throughout this paper, we will suppose that the representation $\rho$ is compatible with the triangulation, that means $\Gamma$ is the group generated by the face-pairing. The representation can be either not discrete or not faithful. At least, it is finitely generated.

In \cite{Falbel}, the author introduces an invariant in $(\C-\{0,1\})^4$ that classifies quadruple of points in the sphere in generic position up to PU(2,1). (By generic position, we mean that no three of them are in a same $\C$-circle). The invariant of a quadruple $(p_0,p_1,p_2,p_3)$ is four cross-ratios given by the following contruction: we work in the unit ball model, in the section $z_3=1$ such that the hyperbolic plane is the ball $B\subset \C^2$ and $p_0,p_1,p_2,p_3$ are in $S^3$. The set of complex lines of $\C^2$ passing through $p_0$ is identified with $\C P^1$. The complex line passing trough $p_0$ and $p_i$ is represented in $\C P^1$ by $p'_i$ $(i=1,2,3)$. The tangent space of $S^3$ in $p_0$ intersects the ball centered at $p_0$ and that gives a fourth point $c'$ in $\C P^1$. We denote $z_1$ the cross-ratio of the four points $c',p'_1,p'_2,p'_3\in \C P^1$. With the same construction at the other vertices, one obtains the invariant of the tetrahedron $(z_1,z'_1,\tilde{z}_1,\tilde{z}'_1)\in \C^4$. The next proposition gives formulas for the invariant.

\begin{prop}\emph{\cite{Will}}
Let $\tilde{p}_i$ be a lift of $p_i$ and $c_{jk}$ a polar vector of the complex line passing through $p_j$ and $p_k$. Then, $(z_1,z'_1,\tilde{z}_1,\tilde{z}'_1)$ are given by the following formulas :

\begin{eqnarray}
z_1=\dfrac{\left\langle \tilde{p}_3, \tilde{c}_{01}\right\rangle \left\langle \tilde{p}_2,\tilde{p}_0 \right\rangle }{\left\langle \tilde{p}_2,\tilde{c}_{01} \right\rangle \left\langle \tilde{p}_3, \tilde{p}_0\right\rangle } =\left[p_0,c_{01},p_2,p_3 \right],
\label{eq_invariant} \qquad z'_1=\dfrac{\left\langle \tilde{p}_2,\tilde{c}_{01} \right\rangle \left\langle \tilde{p}_3, \tilde{p}_1 \right\rangle }{\left\langle \tilde{p}_3, \tilde{c}_{01}\right\rangle \left\langle \tilde{p}_2, \tilde{p}_1\right\rangle } =\left[c_{01},p_1,p_2,p_3 \right],  
\\ \tilde{z}_1=\dfrac{\left\langle \tilde{p}_1,\tilde{c}_{23} \right\rangle \left\langle \tilde{p}_0, \tilde{p}_2 \right\rangle }{\left\langle \tilde{p}_0, \tilde{c}_{23}\right\rangle \left\langle \tilde{p}_1, \tilde{p}_2\right\rangle } =\ov{\left[p_0,p_1,p_2,c_{23} \right]}, \qquad  \tilde{z}'_1=\dfrac{\left\langle \tilde{p}_0,\tilde{c}_{23} \right\rangle \left\langle \tilde{p}_1, \tilde{p}_3 \right\rangle }{\left\langle \tilde{p}_1, \tilde{c}_{23}\right\rangle \left\langle \tilde{p}_0, \tilde{p}_3\right\rangle } =\ov{\left[p_0,p_1,c_{23},p_3 \right]}.
\end{eqnarray}
\end{prop}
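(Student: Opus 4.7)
The plan is to realise the four points $c',p'_1,p'_2,p'_3$ as points of the projective line $\C P^1=\mathbb{P}(\C^{2,1}/\C\tilde p_0)$, compute their cross-ratio there intrinsically, and then translate the resulting determinantal expression into the Hermitian pairings on the right-hand side. I carry this out only for the first formula; the other three follow by the same procedure applied at the vertices $p_1$, $p_2$, $p_3$.

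First, I would fix the identification of the space of complex lines of $\mathbb{H}^2_{\C}$ through $p_0$ with $\C P^1$. Such a line corresponds to a $2$-dimensional complex subspace of $\C^{2,1}$ containing $\tilde p_0$, equivalently to a point of $\mathbb{P}(\C^{2,1}/\C\tilde p_0)\cong\C P^1$. Under this map, the complex line through $p_0$ and $p_i$ corresponds to the class $[\tilde p_i]$, so $p'_i=[\tilde p_i]$. The tangent complex line at $p_0$ is the unique $2$-subspace through $\tilde p_0$ on which the Hermitian form degenerates, namely $\tilde p_0^{\perp}$; its image in the quotient is a single point of $\C P^1$ because $\tilde p_0\in\tilde p_0^{\perp}$. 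Since the polar vector $\tilde c_{01}$ satisfies $\langle \tilde p_0,\tilde c_{01}\rangle=0$ and lies outside $\C\tilde p_0$ (it is positive while $\tilde p_0$ is null), it represents that tangent line: $c'=[\tilde c_{01}]$.

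Second, I would compute the $\C P^1$-cross-ratio with the alternating $2$-form on $\C^{2,1}/\C\tilde p_0$ induced by wedging with $\tilde p_0$, namely $\omega([\tilde u],[\tilde v])=\det(\tilde p_0,\tilde u,\tilde v)$. This gives
\[
z_1 = [c',p'_1,p'_2,p'_3]
      = \frac{\det(\tilde p_0,\tilde c_{01},\tilde p_2)\,\det(\tilde p_0,\tilde p_1,\tilde p_3)}
             {\det(\tilde p_0,\tilde c_{01},\tilde p_3)\,\det(\tilde p_0,\tilde p_1,\tilde p_2)}.
\]
Each determinant can then be rewritten using the Hermitian cross product $\boxtimes$ on $\C^{2,1}$, characterised up to a sign convention by $\det(\tilde u,\tilde v,\tilde w)=\langle \tilde u,\tilde v\boxtimes\tilde w\rangle$. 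Because $\tilde p_0\boxtimes\tilde p_1$ is orthogonal to both $\tilde p_0$ and $\tilde p_1$, it is a scalar multiple $\alpha\tilde c_{01}$; because $\tilde p_0\boxtimes\tilde c_{01}$ is orthogonal to $\tilde p_0$ and to $\tilde c_{01}$, it lies in $\C\tilde p_0$ and equals $\beta\tilde p_0$ for some scalar $\beta$. After a cyclic reordering of the three arguments of each determinant, these two identities turn the four determinants into multiples of $\langle \tilde p_j,\tilde c_{01}\rangle$ and $\langle \tilde p_j,\tilde p_0\rangle$; the auxiliary scalars $\alpha$ and $\beta$ cancel between numerator and denominator, yielding the stated formula for $z_1$.

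The main obstacle I expect is the bookkeeping of conventions: the antilinearity of $\boxtimes$ in each slot, the signs produced by cyclic reordering of $\det$, and the placement of the complex conjugations that will eventually distinguish $\tilde z_1$ and $\tilde z'_1$ from $z_1$ and $z'_1$ when the construction is repeated at $p_2$ and $p_3$. These are mechanical verifications, justified in principle by the projective invariance of the cross-ratio, but care is needed to land precisely on the formulas written above rather than on expressions differing by an overall conjugation, reciprocal, or permutation of the arguments.
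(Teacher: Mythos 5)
The paper itself offers no proof of this proposition --- it is quoted from Will's preprint --- so there is no internal argument to compare against; judged on its own, your derivation is sound and essentially complete for $z_1$, and the steps you defer really are routine. The three facts your argument rests on all check out: the pencil of complex lines through $p_0$ is $\mathbb{P}(\C^{2,1}/\C\tilde p_0)$, and this agrees with the paper's identification via the affine chart up to a projectivity, so the cross-ratio is unaffected; $c'=[\tilde c_{01}]$ because $\tilde c_{01}$ is a positive vector in $\tilde p_0^{\perp}$, which (since $\tilde p_0$ is null) is the unique degenerate $2$-plane through $\tilde p_0$ and projects to the tangent complex line at $p_0$; and the form $\omega([\tilde u],[\tilde v])=\det(\tilde p_0,\tilde u,\tilde v)$ does compute the cross-ratio on the quotient with the standard convention, giving exactly your determinant quotient. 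The cancellation you predict also works: writing $\tilde p_0\boxtimes\tilde p_1=\alpha\tilde c_{01}$ and $\tilde p_0\boxtimes\tilde c_{01}=\beta\tilde p_0$ (the latter because $\mathrm{span}(\tilde p_0,\tilde c_{01})=\tilde p_0^{\perp}$ and $(\tilde p_0^{\perp})^{\perp}=\C\tilde p_0$), cyclic permutation of the determinants gives $\det(\tilde p_0,\tilde c_{01},\tilde p_j)=\bar\beta\langle\tilde p_j,\tilde p_0\rangle$ and $\det(\tilde p_0,\tilde p_1,\tilde p_j)=\bar\alpha\langle\tilde p_j,\tilde c_{01}\rangle$, so $\alpha,\beta$ drop out and the quotient is precisely $\langle\tilde p_3,\tilde c_{01}\rangle\langle\tilde p_2,\tilde p_0\rangle\big/\big(\langle\tilde p_2,\tilde c_{01}\rangle\langle\tilde p_3,\tilde p_0\rangle\big)$. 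Repeating this at $p_2$ and $p_3$ yields the displayed Hermitian expressions for $\tilde z_1,\tilde z'_1$ directly; the overlines in $\ov{[\,\cdot\,]}$ arise only when rewriting those expressions in bracket notation, not from any extra step in the construction. The one loose end is the one you flag yourself: matching the ordering of $(c',p'_1,p'_2,p'_3)$ and its analogues at the other vertices to Falbel's convention. Since the paper states the construction without fixing those orderings, this is a convention check against \cite{Falbel} and \cite{Will} rather than a mathematical gap.
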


\begin{defn}
$k_\Delta$ denotes the field generated by the invariants of all the tetrahedra.
\end{defn}

We prove the following properties of the field $k_{\Delta}$.

\begin{prop}
\item \relax 
\begin{enumerate}
\item $k_\Delta$ is a commensurability invariant.

\item $k_\Delta$ is stable by complex conjugation.

\item Without loss of generality, we may assume that $0,\infty$ and $(1,t)$ are vertices of one of the tetrahedra. In that case, $k_\Delta$ is the field generated by the coordinates of the vertices. More precisely, if the normalized lifts of the points in $\C^3$ are $(w_{j1},w_{j2},1)$, then $k_\Delta=\mathbb{Q}(w_{j1}, \overline{w_{j1}},w_{j2}, \overline{w_{j2}} )$.
\end{enumerate}

\end{prop}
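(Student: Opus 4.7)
The plan is to establish (iii) first, from which (ii) follows immediately, and to handle (i) by a covering argument in the spirit of Proposition \ref{preuve de Mc}.

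For the easy inclusion $k_\Delta\subset\Q(w_{j1},\ov{w_{j1}},w_{j2},\ov{w_{j2}})$ of (iii), I use the formulas (\ref{eq_invariant}) from Proposition 3.3. With respect to the Gram matrix $J$, each pairing $\left\langle \tilde p_i,\tilde p_j\right\rangle$ becomes a polynomial in the normalized lift entries $w_{i\ell}$ and $\ov{w_{j\ell}}$. A polar vector $\tilde c_{jk}$, being the unique (up to scalar) vector orthogonal to both $\tilde p_j$ and $\tilde p_k$, can be written explicitly via a Hermitian cross product of the two lifts, so its coordinates are polynomials in the same data. Because the cross-ratios (\ref{eq_invariant}) are scale-invariant in $\tilde c_{jk}$, each invariant is then a rational function over $\Q$ in the $w_{j\ell}$ and $\ov{w_{j\ell}}$.

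The reverse inclusion I establish by induction on the tetrahedra of the triangulation. The initial tetrahedron $(0,\infty,(1,t),p_0)$ has its first three normalized lifts fixed to explicit vectors depending only on $t$; the four invariants of Proposition 3.3 are then explicit rational expressions in $t$, $w_{01}$, $\ov{w_{01}}$, $w_{02}$ and $\ov{w_{02}}$, which I invert rationally to recover these unknowns from the invariants. There is no extraneous algebraic ambiguity because three boundary points in generic position have trivial PU(2,1)-stabilizer. The connectedness of the dual graph of the triangulation ensures that every other tetrahedron is reachable by a sequence of face-gluings, each step introducing exactly one new vertex whose coordinates must be determined from three previously computed ones together with the four invariants of the current tetrahedron. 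The same rational inversion applies at every step, so all vertex coordinates and their conjugates lie in $k_\Delta$. Part (ii) is now immediate, since $\Q(w_{j1},\ov{w_{j1}},w_{j2},\ov{w_{j2}})$ is manifestly closed under complex conjugation and by (iii) equals $k_\Delta$.

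For (i), if $\Gamma_1\subset\Gamma$ is of finite index, it is the holonomy of a finite cover $\tilde M\to M$; the triangulation of $M$ lifts to a triangulation of $\tilde M$ whose tetrahedra are $\Gamma$-translates of the original ones, hence lie in the same PU(2,1)-orbits and carry the same invariants. Therefore $k_\Delta(\Gamma_1)=k_\Delta(\Gamma)$, and two commensurable groups are compared through a common finite-index subgroup. The main obstacle in the above plan is the inversion step used in the induction of (iii): one must verify that once three vertices of a tetrahedron are held fixed, its four invariants determine the fourth vertex \emph{rationally} in the field they generate, not merely algebraically. Written out explicitly from Proposition 3.3, this amounts to solving a small linear system in the entries of the unknown lift and its conjugate; the Falbel genericity hypothesis (no three vertices on a common $\C$-circle) is precisely what guarantees the relevant determinants are nonzero, so the inversion stays inside $k_\Delta$.
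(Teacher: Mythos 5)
Your proposed order of deduction --- prove (iii) first, then get (ii) for free --- is where the argument breaks, and it is precisely the point the paper handles differently. In the paper, (ii) is proved \emph{first} and independently, via the explicit rational identity
\[
\ov{z_1}=\frac{(-z'_1+z_1z'_1+\tilde z'_1 z'_1-\tilde z'_1)\,\tilde z_1}{\tilde z_1 z_1-\tilde z_1+z_1z'_1-z_1},
\]
so that $k_\Delta$ is known to be conjugation-stable before any induction on tetrahedra starts; this is then used in the proof of (iii). Your induction needs that input (or a substitute) and does not supply it. Concretely: when the new vertex is $\tilde p_3$, the quantities $\langle \tilde p_3,\tilde c_{01}\rangle$, $\langle \tilde p_3,\tilde p_0\rangle$, $\langle \tilde p_3,\tilde p_1\rangle$ and $\langle \tilde p_j,\tilde c_{23}\rangle\propto\det(\tilde p_j,\tilde p_2,\tilde p_3)$ show that $z_1$, $z'_1$ and $\tilde z_1$ are rational in the \emph{unconjugated} entries $w_{31},w_{32}$ only; of the four invariants, only $\tilde z'_1$ (through $\langle \tilde p_1,\tilde p_3\rangle/\langle \tilde p_0,\tilde p_3\rangle$) sees $\ov{w_{31}},\ov{w_{32}}$, and that is a single equation for two unknowns. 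So your ``small linear system in the entries of the unknown lift and its conjugate'' determines $w_{31},w_{32}$ but does not obviously determine their conjugates inside $k_\Delta$. Yet the conjugates are exactly what the next induction step needs: the Hermitian form is antilinear in its second slot and the polar vectors $\tilde c_{jk}$ are built from conjugated lifts, so the coefficients of the system at the next tetrahedron involve $\ov{w_{j\ell}}$ of previously placed vertices. (Conjugating your equations does not help, since their coefficients become $\ov{z_1}$, $\ov{z'_1}$, which without (ii) are not known to lie in $k_\Delta$ --- this is the circularity.) The same gap appears already in your base case, where you assert that $t,w_{01},\ov{w_{01}},w_{02},\ov{w_{02}}$ are all recovered rationally from the four invariants.

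To repair the plan you would have to either prove (ii) first (the paper's route, via the identity above, after which solving for $w_{3\ell}$ automatically puts $\ov{w_{3\ell}}\in k_\Delta$), or genuinely exploit the remaining relations --- the $\tilde z'_1$ equation together with the isotropy condition $\langle \tilde p_3,\tilde p_3\rangle=0$ --- as a $2\times 2$ linear system in $(\ov{w_{31}},\ov{w_{32}})$ and verify its nondegeneracy; you state neither. The rest of your write-up matches the paper: the easy inclusion of (iii) via the formulas of Proposition 3.3, the recovery of $it$ at the normalized tetrahedron, the propagation through the triangulation by linear systems (the paper phrases the nondegeneracy through genericity just as you do), and the one-line covering argument for (i) is exactly the paper's.
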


\begin{proof}
GNA If we have a finite cover of $M$, the new tetrahedra have the same invariants as the previous ones.

\noindent (i) The following formula gives $\ov{z_1}$:
 \[\ov{z_1}=
 \frac {( - z'_1 + z_1
z'_1 + \tilde{z}'_1 z'_1 - \tilde{z}'_1)\tilde{z}_1}{
\tilde{z}_1z_1 - \tilde{z}_1 + z_1z'_1 -z_1}. \]

For the others, we conclude in the same way.

\bigskip

\noindent (ii) The formulas that give $z_1,z'_1,\tilde{z}_1$ and $\tilde{z}'_1$ show that the invariants are calculated with the coordinates. So $k_\Delta$ is included in the field of the coordinates. For the second inclusion, we know that ${}^t(1,0,0)$, ${}^t(0,0,1)$, ${}^t(it,0,1)$ are the vertices of one tetrahedron. The number $it$ can be written with the invariants: 

\[ it=\dfrac{\tilde{z}_1(z_1-1)-z_1(z'_1-1)}{\tilde{z}_1(z_1-1)+z_1(z'_1-1)}.\] 
To conclude, we may consider that the coordinates of the fourth point of a tetrahedron can be calculated with the coordinates of the others three vertices and with the invariants. With (\ref{eq_invariant}):

\[ \left\langle \tilde{p}_3, \tilde{c}_{01}\right\rangle -  \left\langle \tilde{p}_3, \tilde{p}_0\right\rangle \dfrac{\left\langle \tilde{p}_2,\tilde{c}_{01} \right\rangle z_1}{\left\langle \tilde{p}_2,\tilde{p}_0 \right\rangle }= \left\langle \tilde{p}_3, \tilde{c}_{01}  -  \tilde{p}_0 \dfrac{\left\langle \tilde{c}_{01},\tilde{p}_2 \right\rangle \overline{z_1}}{\left\langle \tilde{p}_0,\tilde{p}_2 \right\rangle}\right\rangle =0 \]
\[  \left\langle \tilde{p}_3, \tilde{c}_{01}\right\rangle -  \left\langle \tilde{p}_3, \tilde{p}_1\right\rangle \dfrac{\left\langle \tilde{p}_2,\tilde{c}_{01} \right\rangle}{\left\langle \tilde{p}_2,\tilde{p}_1 \right\rangle z'_1} = \left\langle \tilde{p}_3, \tilde{c}_{01}  -  \tilde{p}_1 \dfrac{\left\langle \tilde{c}_{01},\tilde{p}_2 \right\rangle }{\left\langle \tilde{p}_1,\tilde{p}_2 \right\rangle \overline{z'_1}} \right\rangle  = 0 \]
So the coordinates of the fourth point $\tilde{p}_3$ are the solutions of a invertible linear system and so they are in $k_{\Delta}$. Gradually, we conclude that all the coordinates lie in $k_{\Delta}$.
\end{proof}

\begin{prop}
If each vertex of the triangulation is a parabolic fixed point and the holonomy $\Gamma$ is Zariski dense then $k_\Delta=k(\Gamma)$.
\end{prop}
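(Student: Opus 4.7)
The plan is to prove the two inclusions $k_\Delta \subseteq k(\Gamma)$ and $k(\Gamma) \subseteq k_\Delta$ separately. The first follows from Theorem~\ref{prop coeff trace} applied to the subgroup $\widetilde{\Gamma}^{(3)} \subset \mathrm{SU}(2,1)$; the second uses the description of $k_\Delta$ as the field of vertex coordinates.

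\textbf{The inclusion $k_\Delta \subseteq k(\Gamma)$.} I apply Theorem~\ref{prop coeff trace} to $\widetilde{\Gamma}^{(3)}$, verifying the two hypotheses. First, the cube of a parabolic holonomy around any vertex is a non-trivial parabolic in $\widetilde{\Gamma}^{(3)}$. Second, Zariski density holds: $\widetilde{\Gamma}$ is Zariski dense in $\mathrm{SU}(2,1)$ (its closure surjects onto the closure $\mathrm{PU}(2,1)$ of $\Gamma$ and $\mathrm{SU}(2,1)$ is connected), and since $\widetilde{\Gamma}^{(3)}$ is normal in $\widetilde{\Gamma}$, its closure is a normal algebraic subgroup of $\mathrm{SU}(2,1)$; the only such subgroups are central (hence finite) or all of $\mathrm{SU}(2,1)$, so the presence of an infinite-order parabolic forces the latter. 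After the resulting conjugation, $\widetilde{\Gamma}^{(3)} \subset \mathrm{SU}(2,1, k(\Gamma))$. Each triangulation vertex $v$ is fixed by a parabolic $P \in \widetilde{\Gamma}^{(3)}$ (the cube of its holonomy); Lemma~\ref{lemma eigenvalues}, together with the stability of $k(\Gamma)$ under complex conjugation, places the eigenvalue $\lambda$ of $P$ at $v$ in $k(\Gamma)$, so solving $(P - \lambda \mathrm{Id})\tilde v = 0$ over $k(\Gamma)$ yields a lift $\tilde v \in k(\Gamma)^3$. The formulas of Proposition~3.2 then express each invariant as a ratio of Hermitian inner products of the $\tilde p_i$ and polar vectors---the latter being Hermitian cross-products of the $\tilde p_i$---all of which remain in $k(\Gamma)$.

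\textbf{The inclusion $k(\Gamma) \subseteq k_\Delta$.} By part (iii) of the preceding proposition, I may assume every triangulation vertex has normalized lift with entries in $k_\Delta$. Take any $g \in \Gamma$: since $\rho$ is compatible with the triangulation, $g$ permutes the tetrahedra of the universal cover, so it sends four generic-position vertices (with lifts in $k_\Delta^3$) to four other vertices (again with lifts in $k_\Delta^3$). A standard projective normalization---rescaling source and target lifts inside $k_\Delta$ so that three of each become the standard basis and the fourth becomes $\tilde e_1 + \tilde e_2 + \tilde e_3$---produces a matrix $M \in \mathrm{GL}(3, k_\Delta)$ representing $g$ in $\mathrm{PGL}(3, \C)$. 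Any $\mathrm{SU}(2,1)$-lift $\tilde g$ of $g$ has the form $cM$ with $c^3 \det M = 1$, so the canonical cube $\tilde g^3 = M^3/\det M$ has entries in $k_\Delta$. Hence $\widetilde{\Gamma}^{(3)} \subset \mathrm{M}_3(k_\Delta)$, giving $k(\Gamma) = \Tr(\widetilde{\Gamma}^{(3)}) \subseteq k_\Delta$.

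\textbf{Main obstacle.} The delicate points are the Zariski density of $\widetilde{\Gamma}^{(3)}$ (which relies on the simplicity of $\mathrm{SU}(2,1)$ as an algebraic group) and the control of the cube-root-of-unity ambiguity when lifting a $\mathrm{PGL}(3, \C)$-matrix to $\mathrm{SU}(2,1)$. The latter is precisely why the equality is stated for $k(\Gamma) = \Tr(\widetilde{\Gamma}^{(3)})$ rather than for $\Tr(\widetilde{\Gamma})$: cubing kills the ambiguity.
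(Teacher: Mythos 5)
Your proposal is correct and follows the paper's two-inclusion skeleton, but both halves are implemented genuinely differently. For $k_\Delta\subseteq k(\Gamma)$, the paper applies Theorem~\ref{prop coeff trace} to $\widetilde{\Gamma}$ itself (Zariski density of $\Gamma$ lifts immediately), deduces $k_\Delta\subseteq \Tr(\widetilde{\Gamma})$ from the lemma on parabolic fixed points, and only then descends to $\Tr(\widetilde{\Gamma}^{(3)})$ by invoking the Burnside problem (so that $\widetilde{\Gamma}^{(3)}$ has finite index) together with the commensurability invariance of $k_\Delta$; you instead apply the theorem directly to $\widetilde{\Gamma}^{(3)}$, which costs you the normal-subgroup/almost-simplicity argument for its Zariski density but frees you from Burnside and from any finite-index or commensurability considerations. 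For $k(\Gamma)\subseteq k_\Delta$, the paper writes an explicit matrix $M$ sending $\infty,0,(1,t)$ to the image vertices, uses ${}^t\ov{M}JM=\mathrm{Id}$ to fill in the middle column, and computes that $|\lambda|^2$ and $\lambda^3$ lie in the coordinate field, whence $M^3\in \mathrm{SU}(2,1,k_\Delta)$; your projective frame normalization on a full generic quadruple, with the scalar ambiguity killed by cubing ($\tilde g^3=M^3/\det M$), reaches the same conclusion without any appeal to unitarity and is arguably cleaner, at the price of using the genericity of the quadruple (no three vertices on a common $\C$-circle), which the invariants presuppose anyway. Two steps where you are exactly as terse as the paper, so I do not count them as gaps: both arguments quietly use that the $\Gamma$-images of the vertices of the base tetrahedron again have coordinates in $k_\Delta$ (the ``gradual'' propagation through the developed triangulation hidden in the proposition describing $k_\Delta$ by vertex coordinates), and when recovering a parabolic fixed point from $(P-e^{i\theta}\mathrm{Id})\tilde v=0$ the relevant eigenspace can be two-dimensional for certain unipotents, so strictly one must also impose isotropy of $\tilde v$ (or take the image of $(P-e^{i\theta}\mathrm{Id})$), a repair needed equally by the paper's fixed-point lemma.
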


\begin{proof}
The first inclusion $k(\Gamma) \subset k_\Delta$ is always true, even if $\Gamma$ does not contain a parabolic transformation and is not Zariski dense. It comes from the following lemma:

\begin{lemma}
Let $\gamma\in \Gamma$ and $M\in SU(2,1)$ a lift of $\gamma$. The non homogeneous coordinates of the images of $0, \infty, (1,t)$ by $\gamma$ are noted $\tilde{p}_0=\,^t(w_{01},w_{02},1), \tilde{p}_1=\,^t(w_{11},w_{12},1), \tilde{p}_2= \,^t(w_{21},w_{22},1)$. Let $K$ be the field stable by complex conjugation generated by the coordinates: $K=\Q(it,w_{jk},\overline{w_{jk}})$. Then  $M^3 \in$ SU$(2,1,K)$.
\end{lemma}

\begin{proof}
The following matrix sends $\infty$ to $p_0$ and $0$ to $p_1$ :
 $$M=\left(
\begin{array}{ c c c }
\mu w_{01}&  a  &\lambda w_{11} \\
\mu w_{02}& b   & \lambda w_{12}\\
\mu&  c  &\lambda
  \end{array} \right).
$$

\noindent $M$ is in SU(2,1) if it satisfies $^t\ov{M}JM=\text{Id}$, that induces the following equation for its coefficients:

\[a=\dfrac{\overline{\lambda}}{\lambda}\dfrac{\overline{w_{01}}\overline{w_{12}}-\overline{w_{02}}\overline{w_{11}} }{\left\langle \tilde{p}_1,\tilde{p}_0\right\rangle }, \quad b=\dfrac{\overline{\lambda}}{\lambda}\dfrac{\overline{w_{01}}-\overline{w_{11}}}{\left\langle \tilde{p}_1,\tilde{p}_0\right\rangle}, \quad c=\dfrac{\overline{\lambda}}{\lambda}\dfrac{\overline{w_{12}}-\overline{w_{02}}}{\left\langle \tilde{p}_1,\tilde{p}_0\right\rangle},  \quad \mu=\dfrac{1}{\overline{\lambda}\left\langle \tilde{p}_0,\tilde{p}_1\right\rangle}.\]

\[M=\lambda\left(
\begin{array}{ c c c }
\dfrac{w_{01}}{|\lambda|^2\left\langle \tilde{p}_0,\tilde{p}_1\right\rangle} &  \dfrac{|\lambda|^2}{\lambda^3}\dfrac{\overline{w_{01}}\overline{w_{12}}-\overline{w_{02}}\overline{w_{11}} }{\left\langle \tilde{p}_1,\tilde{p}_0\right\rangle }  & w_{11} 
\\ \dfrac{w_{02}}{|\lambda|^2\left\langle \tilde{p}_0,\tilde{p}_1\right\rangle}&  \dfrac{|\lambda|^2}{\lambda^3}\dfrac{\overline{w_{01}}-\overline{w_{11}}}{\left\langle \tilde{p}_1,\tilde{p}_0\right\rangle}  &  w_{12}
\\\dfrac{1}{|\lambda|^2\left\langle \tilde{p}_0,\tilde{p}_1\right\rangle} &  \dfrac{
|\lambda|^2}{\lambda^3}\dfrac{\overline{w_{12}}-\overline{w_{02}}}{\left\langle \tilde{p}_1,\tilde{p}_0\right\rangle}  &1
  \end{array} \right) .\]

\noindent The image of $(1,t)$ is $\tilde{p}_3$ : $M \,^t( \frac{-1}{2}+i\frac{t}{2} , 1 ,1 ) =\alpha \,^t( w_{20} ,w_{21},1 ).$ Eliminating $\alpha$, we find: 
\[|\lambda|^2=\dfrac{(\frac{-1}{2}+i\frac{t}{2})\left\langle \tilde{p}_2,\tilde{p}_0\right\rangle }{\left\langle \tilde{p}_2,\tilde{p}_1\right\rangle  \left\langle \tilde{p}_1,\tilde{p}_0\right\rangle},\hspace{1cm}\lambda^3=-\dfrac{(\frac{-1}{2}+i\frac{t}{2})\left\langle \tilde{p}_0,\tilde{p}_1\right\rangle \left\langle \tilde{p}_2,\tilde{p}_0\right\rangle^2}{\det(\tilde{p}_0,\tilde{p}_1,\tilde{p}_2)\left\langle \tilde{p}_2,\tilde{p}_1\right\rangle \left\langle \tilde{p}_1,\tilde{p}_0\right\rangle^2}.\]
Observe that $|\lambda|^2$ and $\lambda^3$ are in $K$, but $\lambda$ is not necessarily in the field. The coefficients of the matrix $\frac{1}{\lambda}M$ are in $K$. So we can not conclude for the coefficients of $M$. On the other hand, it becomes obvious that $M^3$ is in SU$(2,1,K)$.
\end{proof}

The set $\widetilde{\Gamma}$ is the group of the lifts in SU(2,1) of $\Gamma$. The previous lemma proves that the coefficients of the matrices in $\widetilde{\Gamma}^3$ are in $k_{\Delta}$. So we have the first inclusion.

For the second inclusion, we use Theorem \ref{prop coeff trace} so that $\widetilde{\Gamma}\subset \text{SU}(2,1,\text{Tr}(\widetilde{\Gamma})$). To calculate $k_\Delta$, we use the coordinates of the vertices. By hypothesis, they are fixed parabolic points. The following lemma proves that their coordinates are rational functions  of the coefficients of matrices in $\widetilde{\Gamma}$. So $k_\Delta \subset \text{Tr}(\widetilde{\Gamma})$. The solution of the Burnside problem implies that $\widetilde{\Gamma}^{(3)}$ is a finite index subgroup of $\widetilde{\Gamma}$. As the field $k_\Delta$ is a commensurability invariant, $k_\Delta \subset \text{Tr}(\widetilde{\Gamma}^3)=k(\Gamma)$.

\begin{lemma}
Let $P\in \text{SU}(2,1)$ be a lift of a parabolic transformation. The non homogeneous coordinates of its fixed point in $V_0$ are given by $\tilde{p}=\,^t(w_1,w_2,1)$. Then $w_1$ and $w_2$ are in the field generated by the coefficients of $P$.
\end{lemma}

\begin{proof}
The complex number $e^{i\theta}$ is the eigenvalue of $P$ associated to $\tilde{p}$. By Lemma \ref{lemma eigenvalues}, we know that it is in $\mathbb{Q}(\text{Tr} P,\overline{\text{Tr} P})$. The point $\tilde{p}$ satisfies the relation: $(P-e^{i\theta}\text{Id})\tilde{p}=0$. That gives a linear system in the variables $w_1$ and $w_2$ with coefficients in the field generated by the coefficients of $P$. It has a unique solution because a parabolic transformation has a unique fixed point in $S^3$. So the solutions $w_1$ and $w_2$ of this invertible system are rational functions of the coefficients of $P$.
\end{proof}
\end{proof}

\section{Arithmetic groups and trace fields}

We are looking for the same conclusions with arithmetic lattices. We deal only with arithmetic lattices of the first type, so they are the only ones we define. To see a complete description of arithmetic lattices in SU($n$,1), see \cite{McReynolds}.

\begin{defn}
 \label{defn arithmetic}
Let $E$ be a totally imaginary quadratic extension of a totally real number field $F$. The degree $[F:\Q]$ is noted $d$. Up to conjugacy, there are $d$ embeddings $\tau_i:E\rightarrow \C$ and associated to each of them $\sigma_i:F\rightarrow \R$. We may assume that $E\subset \C$ and $F=\R \cap E \subset \R$ and so $\sigma_1=\text{Id}$ and $\tau_1=\text{Id}$.

$H$ is a Hermitian matrix of signature $(n,1)$ and its coefficients are in $E$. We denote $^{\tau_i}H$ the Hermitian matrix obtained by replacing the coefficients of $H$ by their images under $\tau_i$. The pair $(H,E/F)$ is \textit{admissible} if SU$(^{\tau_i}H)$ is compact for all $i=2,\cdots d$.

The set $\mathcal{O}_E$ is the ring of integers of $E$. The group SU$(H,\mathcal{O}_E)$ is the set of matrices that preserve $H$ whose coefficients are in $\mathcal{O}_E$. It is the fundamental example of arithmetic lattice when $(H,E/F)$ is admissible.

A group $\Gamma\subset \text{SU}(n,1)$ is an \textit{arithmetic lattice of first type} if it is conjugated to a group commensurable with $\text{SU}(H,\mathcal{O}_E)$ for an admissible couple $(H,E/F)$.
\end{defn}

\begin{lemma} \label{lemme 1}Let $\Gamma$ be a subgroup of SU$(H,\mathcal{O}_E)$. If $\Gamma$ contains a loxodromic transformation, then $\corps= E$ or $\corps= F$.
\end{lemma}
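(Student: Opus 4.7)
The plan is a Galois-descent argument: assuming $K := \Tr(\Gamma)$ is neither $F$ nor $E$, I would construct a non-trivial embedding of $E$ that fixes $K$ pointwise and lands at one of the ``compact'' archimedean places coming from $\tau_i$ with $i \geq 2$. Loxodromicity of $L$ then forces $|\Tr(L^n)|$ to be unbounded, contradicting the boundedness that the compact image imposes.

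First, $K \subseteq E$ is immediate from $\Gamma \subseteq \text{SU}(H, \mathcal{O}_E)$, and the identity $\Tr(g^{-1}) = \ov{\Tr(g)}$ shows that $K$ is stable under complex conjugation. Set $F_0 := K \cap F$. Conjugation-stability then forces $K$ to be either $F_0$ itself (when $K \subseteq F$) or a quadratic extension $F_0(\sqrt{-d'})$ with $d' \in F_0$ (obtained by squaring any purely imaginary element of $K$). Under the assumption $K \neq F$, $K \neq E$, one verifies that $F_0 \subsetneq F$: in the first sub-case this is immediate, and in the second, $F_0 = F$ would yield $K = F(\sqrt{-d'}) = E$, since $\sqrt{-d'} \in E = F(\sqrt{-d})$ and the only quadratic extension of $F$ inside $E$ is $E$ itself.

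Pick a non-trivial embedding $\sigma : F \hookrightarrow \ov{\Q}$ fixing $F_0$; up to identification it is some $\sigma_i$ with $i \geq 2$. I would then extend $\sigma$ to an embedding $\tilde\sigma : E \hookrightarrow \ov{\Q}$ that still fixes $K$ pointwise. When $K = F_0$ this is automatic. When $K = F_0(\sqrt{-d'})$, write $\sqrt{-d'} = q\sqrt{-d}$ with $q \in F$ (possible inside $E = F(\sqrt{-d})$), so that $d' = q^2 d$. The membership $d' \in F_0$ forces $\sigma(d') = d'$, i.e.\ $(q/\sigma(q))^2 = \sigma(d)/d$, and this equation allows us to choose the sign in $\tilde\sigma(\sqrt{-d}) = \pm \sqrt{-\sigma(d)}$ so that $\tilde\sigma(\sqrt{-d'}) = \sqrt{-d'}$, whence $\tilde\sigma|_K = \mathrm{Id}$.

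Having such a $\tilde\sigma$ in hand, the contradiction comes quickly. Since $\tilde\sigma|_F = \sigma_i$ for some $i \geq 2$, admissibility of $(H,E/F)$ gives $^{\tilde\sigma}L \in \text{SU}(^{\tilde\sigma}H)$, a compact group; all eigenvalues of $^{\tilde\sigma}L$ lie on the unit circle, and $|\Tr(^{\tilde\sigma}L^n)| \leq 3$ for every $n$. But $\Tr(L^n) \in K$ is fixed by $\tilde\sigma$, and applying $\tilde\sigma$ entry-wise commutes with the trace, so $\Tr(L^n) = \Tr(^{\tilde\sigma}L^n)$, whence $|\Tr(L^n)| \leq 3$ for all $n \in \Z$. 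This contradicts $L$ being loxodromic: if $\lambda$ is its eigenvalue with $|\lambda| > 1$, then $|\Tr(L^n)| = |\lambda^n + (\ov\lambda/\lambda)^n + 1/\ov\lambda^n|$ grows like $|\lambda|^n$. The main delicacy is the construction of $\tilde\sigma$ when $K \not\subseteq F$; the algebraic identity $d' = q^2 d \in F_0$ is precisely what makes the square-root sign problem solvable, and without this observation the extension of $\sigma$ might fail to fix $K$ pointwise.
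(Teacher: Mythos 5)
Your proof is correct, and its engine is the same as the paper's: find a non-trivial archimedean place of $E$ that fixes the trace field pointwise, use admissibility to place the image of $\Gamma$ in a compact group $\text{SU}(^{\tau}H)$ where traces are bounded (by $n+1$ in general -- your bound $3$ and your three-term trace formula are the $n=2$ case, a trivial adjustment since the lemma is stated in SU$(n,1)$), and contradict the unboundedness of $\Tr(L^m)$ forced by the loxodromic eigenvalue. The genuine difference is in the field-theoretic preliminaries. The paper disposes of the case analysis in one line, asserting that $\corps\neq E$ together with $[E:F]=2$ forces $\corps\subset F$, and then only treats the case $\corps\subsetneq F$; but a conjugation-stable subfield of $E$ need not be comparable with $F$ (e.g.\ $\Q(i)\subset\Q(\sqrt{15},i)$), so this step is really an omitted case. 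You handle exactly that case: writing $K=F_0(\sqrt{-d'})$ with $F_0=K\cap F\subsetneq F$, $d'=q^2d\in F_0$, and choosing the sign of $\tilde\sigma(\sqrt{-d})$ so that $\tilde\sigma$ fixes $K$ pointwise while restricting to a non-trivial $\sigma_i$ on $F$. This extension argument is the one piece of content the paper's proof lacks, and it is what makes the dichotomy ``$\corps=E$ or $\corps=F$'' actually come out of the compactness contradiction rather than being assumed at the start; the only detail you leave implicit (as does the paper) is that $\tilde\sigma$ intertwines the conjugation of $E/F$ with complex conjugation, which is automatic because the two extensions of $\sigma_i$ to the totally imaginary field $E$ are complex conjugate to each other, so $^{\tilde\sigma}L$ does lie in $\text{SU}(^{\tilde\sigma}H)$.
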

\begin{proof}
Obviously, we have $\corps\subset E$. If $\corps \neq E$, as $[E:F]=2$, then $\corps \subset F$. Suppose the two fields are not equal. Then we can find an embedding  $\sigma : F\rightarrow \R$ which is not trivial such that $\sigma_{|\corps}=\text{Id}$. The application $\sigma$ is the restriction to $F$ of a place $\tau\colon E\rightarrow \C$ which is neither identity nor complex congugation. 

We denote $\gamma$ the loxodromic transformation, $\lambda$ the eigenvalue of $\gamma$ whose modulus is not 1 and $e^{i\theta_j}$ the other eigenvalues. Then:
\[\Tr \:\gamma^m=\lambda^m +\dfrac{1}{\overline{\lambda}^m}+e^{i m\theta_1 }+\cdots+ e^{i m\theta_{n-1} } .\]
So, $\lim_{m\rightarrow\infty}\Tr \:\gamma^m=\infty$. On the other hand, $\text{Tr}(\gamma)=\sigma(\text{Tr}(\gamma))=\tau(\text{Tr}(\gamma))=\text{Tr}(^{\tau} \gamma)$. The group $\Gamma$ is included in the arithmetic group SU$(H,\mathcal{O}_E)$. So $^\tau \gamma$ is in the compact group SU$(^\tau H)$ and the trace function is bounded. That contradiction proves the result.
\end{proof}

\begin{prop}
\label{prop2}
Let $\Gamma$ be an arithmetic lattice of the first type in SU(n,1). Then, up to conjugacy, the coefficients of $\Gamma$ are in $\corps (\alpha)$ \emph{(}where $E=F(\alpha)$\emph{)}.
\end{prop}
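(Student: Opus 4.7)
The plan is to adapt the ``trace basis'' technique from Theorem \ref{prop coeff trace} to the arithmetic setting, replacing its ad hoc construction around a parabolic element by a direct appeal to Borel's density theorem, and using Lemma \ref{lemme 1} to identify the trace field.

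First I would fix a representative in the conjugacy class of $\Gamma$ so that $\Gamma$ is commensurable with $\Lambda := \text{SU}(H,\mathcal{O}_E)$, as allowed by Definition \ref{defn arithmetic}. Set $\Gamma_1 := \Gamma \cap \Lambda$: a finite-index subgroup of $\Gamma$ whose matrices have entries in $\mathcal{O}_E \subset E$. As a lattice in SU$(n,1)$, $\Gamma$ contains loxodromic elements, and a suitable nontrivial power of any such element lies in $\Gamma_1$, so Lemma \ref{lemme 1} applies to $\Gamma_1$ and gives $\Tr(\Gamma_1) \in \{F, E\}$. In particular $F \subset \Tr(\Gamma_1) \subset \corps$, and therefore
\[ \corps(\alpha) \;=\; \corps \cdot F(\alpha) \;=\; \corps \cdot E. \]
It now suffices to show that every $\gamma \in \Gamma$ has entries in the compositum $\corps \cdot E$.

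Next I would invoke Borel's density theorem: the lattice $\Gamma_1$ is Zariski dense in SU$(n,1)$, and since SU$(n,1)$ is a real form of $\text{SL}_{n+1}(\C)$, the group $\Gamma_1$ is also Zariski dense in $\text{SL}_{n+1}(\C)$, so its $\C$-linear span is the whole matrix algebra $M_{n+1}(\C)$. One can therefore extract $N := (n+1)^2-1$ matrices $A_1,\dots,A_N \in \Gamma_1$ such that $(\text{Id}, A_1, \dots, A_N)$ is a $\C$-basis of $M_{n+1}(\C)$, and by construction every $A_i$ has entries in $E$. For any $\gamma \in \Gamma$, the trace trick of Theorem \ref{prop coeff trace} expresses the entries of $\gamma$ as the unique solution of the linear system
\[ \Tr(\gamma A_i) \;=\; a_i, \qquad 0 \leq i \leq N, \]
with the convention $A_0 = \text{Id}$. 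The coefficient matrix of this system has entries in $E$ (namely the entries of the $A_i$) and is invertible, while the right-hand sides $a_i = \Tr(\gamma A_i)$ lie in $\corps$ because $\gamma A_i \in \Gamma$. Applying Cramer's rule then places the entries of $\gamma$ in $\corps \cdot E = \corps(\alpha)$, which is the desired conclusion.

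The main technical ingredient is the Zariski density of $\Gamma_1$ in $\text{SL}_{n+1}(\C)$; this rests on Borel's theorem together with the standard fact that a semisimple real form is Zariski dense in its complexification. Once this is in place, the cube-root ambiguities that complicated the Skolem--Noether step of Proposition \ref{preuve de Mc} are avoided entirely, since the linear-system approach recovers the entries of $\gamma$ directly and cleanly inside the target field.
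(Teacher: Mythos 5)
Your proof is correct, but it takes a genuinely different route from the paper's at the key step. The common part is the reduction: conjugate so that $\Gamma$ is commensurable with $\mathrm{SU}(H,\mathcal{O}_E)$, pass to a finite-index subgroup $\Gamma_1$ with entries in $\mathcal{O}_E$, and use a loxodromic element together with Lemma \ref{lemme 1} to get $F\subset \Tr(\Gamma_1)\subset \corps$, hence $\corps\cdot E=\corps(\alpha)$. From there the paper argues as in Proposition \ref{preuve de Mc}: conjugation by $g\in\Gamma$ induces an automorphism of the central simple algebra $A(\Gamma_1)$, Skolem--Noether gives $g=\beta g_1$ with $g_1\in A(\Gamma_1)$ having entries in $E$, and the scalar is pinned down by $\beta=\Tr(g)/\Tr(g_1)\in\corps(\alpha)$, with a separate loxodromic perturbation ($\gamma^n$ and $\gamma^{-n}g$) to handle the case $\Tr(g)=0$. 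You instead use Borel density to see that $\Gamma_1$ is Zariski dense in $\mathrm{SL}_{n+1}(\C)$, hence spans $M_{n+1}(\C)$, extract a basis $(\mathrm{Id},A_1,\dots,A_N)$ of $M_{n+1}(\C)$ inside $\Gamma_1$ with entries in $E$, and recover the entries of any $\gamma\in\Gamma$ from the linear system $\Tr(\gamma A_i)=a_i\in\corps$ via the nondegeneracy of the trace form, exactly as in the opening of Theorem \ref{prop coeff trace}. Your route avoids Skolem--Noether, the normality reduction, the scalar ambiguity and the $\Tr(g)=0$ case, at the price of invoking Borel's density theorem plus the (standard, and correctly flagged) fact that $\mathrm{SU}(n,1)$ is Zariski dense in its complexification; the paper's route needs only the existence of a loxodromic element and stays inside the central-simple-algebra machinery already set up for the commensurability invariance. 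Both arguments use Lemma \ref{lemme 1} in the same way, namely to guarantee $E\subset\corps(\alpha)$.
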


\begin{proof}
After conjugation, we may assume that $\Gamma$ is commensurable with SU$(H,\mathcal{O}_E)$ and we denote $\Gamma_1$ a group that is a finite index subgroup of both of them. As an arithmetic lattice, $\Gamma_1$ contains a loxodromic transformation. From lemma \ref{lemme 1}, $\text{Tr }(\Gamma_1)=E$ or $\text{Tr }(\Gamma_1)=F$. 

As in proposition \ref{preuve de Mc}, the Skolem-Noether theorem implies that for all $g\in \Gamma$, there exist $g_1\in A(\Gamma_1)$ and $\beta\in\mathbb{C}^*$ such that $g=\beta g_1$. The coefficients of $g_1$ are in $E$ itself included in $\text{Tr}(\Gamma_1)(\alpha)\subset \corps(\alpha)$. What remains to be shown is that $\beta\in \corps(\alpha)$. As $\Tr g =\beta \Tr g_1$, if $\Tr g \neq 0$, that completes the proof. Otherwise, take a loxodromic element $\gamma$. For $n$ large enough, $\Tr \gamma^n$ and $\Tr \gamma^{-n}g$ are not zero. By the previous argument, $  \gamma^n$ and $\gamma^{-n}g$ have their coefficients in $\corps(\alpha)$, and so $g$ too.
\end{proof}

If $\Gamma$ is an arithmetic lattice contained in SU$(H,\mathcal{O}_E)$, then $\corps\subset E$ as we have seen in lemma \ref{lemme 1}. This is generally not true as is proved by the following counterexample: in SU(1,1), we construct an arithmetic group such that $E\subsetneq\corps$.

\begin{ex}
Let $E=\Q(\sqrt{15},i)$, $F=\Q(\sqrt{15})$ and $H=\text{diag}(-\sqrt{15},1)$. We denote by $\Gamma$ the arithmetic lattice SU$(H,\mathcal{O}_E)$ and $g=\sqrt{4+\sqrt{15}}\begin{pmatrix}
2&1 \\
\sqrt{15}&2 
\end{pmatrix}$. The element $g$ satisfies: $g^*Hg=H$, det$(g)=1$ and $g^2 \in \Gamma$. The group $\Gamma$ has index two in $\widetilde{\Gamma}=\left\langle \Gamma,g\right\rangle $ and so $\widetilde{\Gamma}$ is arithmetic. The trace field of this group contains $\sqrt{4+\sqrt{15}}$ which is not in $E$.
\end{ex}

\begin{rem}
In the real case, some arithmetic groups are constructed following definition \ref{defn arithmetic}. Let $k$ be a totally real number field and $\tau_i$ the real embeddings of $k$. In SO($n$,1), the matrix $h$ is symmetric of signature $(n,1)$ so that SO$(^{\tau_i}h)$ is compact. The group SO$(h,\mathcal{O}_k)$ is an arithmetic lattice. In SO($2m,1)$, we obtain all the arithmetic lattices in that way (up to commensurability and conjugation). An application of the Skolem-Noether theorem proves that these lattices satisfy $k=\corps$. 
Again, as in the complex case, this is generally false when $n$ is odd.
\end{rem}

\end{document}